\theoremstyle{plain}
\newtheorem{theorem}{Theorem}[section]
\newtheorem{lemma}{Lemma}[section]
\theoremstyle{definition}
\newcommand{\mf}[1]{\displaystyle{\mathfrak{#1}}}
\newcommand{\comment}[1]{}
\begin{document}

\title[commutants of monomials]{Commutants of Toeplitz operators with  monomial symbols }

\author{Akaki Tikaradze}
\address{Department of Mathematics, Universiy of Toledo, Toledo, Ohio, USA}
\email{\tt tikar06@gmail.com}
\begin{abstract}

In this note we describe the commutant of the multiplication operator by  a monomial 
in the Toeplitz algebra of a complete strongly pseudoconvex Reinhardt domain.

\end{abstract}
\maketitle

Throughout given an $n$-tuple of nonnegative integers $\alpha=(i_1, \cdots, i_n)$ and 
$z=(z_1,\cdots,z_n)\in\mathbb{C}^n,$ we will
denote monomial $z_1^{i_1}\cdots z_{n}^{i_n}$ by $z^{\alpha}.$ Also, $\mathbb{Z}_+$ will denote the set of
all nonnegative integers.

Recall that a bounded domain $\Omega\subset \mathbb{C}^n$ is said to be a complete Reinhardt domain
if $(z_1,\cdots, z_n)\in \Omega$ implies $(a_1z_1, \cdots, a_nz_n)\in \Omega$
for any complex numbers $a_i\in \bar{D},  1\leq i\leq n,$ where $D=\lbrace z\in \mathbb{C}: |z|<1\rbrace$ denotes the unit disk.

As usual, the Bergman space of square integrable holomorphic functions on a bounded domain $\Omega\subset \mathbb{C}^n$
is denoted by $A^2(\Omega).$
 Recall that given a bounded measurable function $f\in L^{\infty}(\Omega),$
one defines the corresponding Toeplitz operator $T_f:A^2(\Omega)\to A^2(\Omega)$ with symbol $f$
 as $T_f(\phi)=P(f\phi),  \phi\in A^2(\Omega)$, where 
$P:L^2(\Omega)\to A^2(\Omega)$ is the orthogonal projection.
If the symbol $f$ is holomorphic, then $T_f$ is the multiplication operator on $A^2(\Omega)$ with symbol $f.$
Let $\mf{T}(\Omega)$ denote the $C^*$-algebra generated by $\lbrace T_g : g \in  L^{\infty}(\Omega)\rbrace.$
We will refer to this algebra as the Toeplitz  $C^{*}$-algebra of $\Omega.$

Given a Toplitz operator $T_f$, it is of great interest to study the commutant of $T_f$ in the Toeplitz $C^{*}$-algebra of $\Omega.$ 

To this end, in the case of the unit disk $\Omega=D$ in $\mathbb{C}$,  Z. Cuckovic [\cite{Cu}, Theorem 1.4] showed that
  the commutant of $T_{z^k}, k\in \mathbb{N} $ in the Toeplitz $C^{*}$-algebra of $D$
  consists of the Toeplitz operators with
bounded holomorphic symbols. Subsequently T. Le [\cite{Le}, Theorem 1.1] has generalized Cuckovic's result
  to the case of  of the unit ball $\Omega=B_n\subset \mathbb{C}^n$ and a monomial $f=z_{1}^{m_1}\cdots z_{n}^{m_n}$ 
 such that $m_i>0$ for all $1\leq i\leq n.$
  
In this note we extend Le's  result to the case of strongly pseudoconvex complete Reinhardt domains. (Theorem \ref{Le}.)
Moreover, our proof is simpler and computation free.

As in \cite{Cu}, \cite{Le}, the following result plays the crucial role in the proof of
Theorem \ref{Le}.

\begin{theorem}\label{Trieu}

Let $\Omega\subset \mathbb{C}^n$ be a bounded complete Reinhardt domain, and let
$f={z_1}^{m_1}{z_2}^{m_2}\cdots {z_n}^{m_n} $  be a monomial such that $m_i>0$ for all $1\leq i\leq n.$
 If $S:A^2(\Omega)\to A^2(\Omega)$ is a compact
operator that commutes with  $T_f,$ then $S=0.$

\end{theorem}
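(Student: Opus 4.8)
The key structure to exploit is the monomial basis of $A^2(\Omega)$. Since $\Omega$ is a complete Reinhardt domain, the monomials $\{z^\alpha : \alpha \in \mathbb{Z}_+^n\}$ form an orthogonal basis of $A^2(\Omega)$; let $e_\alpha = z^\alpha/\|z^\alpha\|$ be the normalized basis. The multiplication operator $T_f$ with $f = z^m$ (where $m = (m_1,\dots,m_n)$, all $m_i > 0$) acts as a weighted shift: $T_f e_\alpha = c_\alpha e_{\alpha+m}$ where $c_\alpha = \|z^{\alpha+m}\|/\|z^\alpha\| > 0$. The plan is to show that any compact $S$ commuting with this weighted shift must be zero, by tracking what commutation forces on the "matrix entries" $S_{\beta,\alpha} = \langle S e_\alpha, e_\beta\rangle$ and then using compactness to kill everything.

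First I would write out the commutation relation $ST_f = T_f S$ in coordinates: applying both sides to $e_\alpha$ and pairing with $e_\beta$ gives $c_\alpha S_{\beta, \alpha+m} = c_{\beta - m} S_{\beta - m, \alpha}$ (with the convention that terms involving a multi-index having a negative entry vanish). Iterating this relation along the lattice direction $m$, one finds that for each fixed pair $(\beta,\alpha)$ the entries $S_{\beta + km, \alpha + km}$, $k \geq 0$, are all determined by a single one of them up to nonzero scalar factors (ratios of the positive constants $c$). Concretely, $S_{\beta+km,\,\alpha+km} = \big(\prod \text{ratios}\big)\, S_{\beta,\alpha}$, a nonzero multiple. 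The second, and I expect main, step is to bring in compactness: since $S$ is compact, $\|S e_{\alpha+km}\| \to 0$ as $k \to \infty$ (the $e_{\alpha+km}$ converge weakly to $0$), hence $S_{\beta+km,\,\alpha+km} \to 0$. Combined with the fact that this sequence is a fixed nonzero multiple of $S_{\beta,\alpha}$, we must divide out carefully — the scalar multiples themselves might blow up or decay — so the clean way is: $|S_{\beta,\alpha}|$ equals $|S_{\beta+km,\alpha+km}|$ divided by a product of the $c$-ratios, and one checks this quotient also tends to $0$, forcing $S_{\beta,\alpha} = 0$.

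The technical heart is therefore controlling the constants $c_\alpha = \|z^{\alpha+m}\|/\|z^\alpha\|$. Here boundedness of $\Omega$ gives an upper bound (it sits inside a polydisk, so $\|z^{\alpha+m}\|/\|z^\alpha\| \leq R^{|m|}$ for the polyradius $R$), and completeness of the Reinhardt domain gives monotonicity/lower control: $\Omega$ contains a small polydisk, so $c_\alpha$ is bounded below by a positive constant as well, or at worst one gets polynomial control. With two-sided bounds on each $c_\alpha$, the telescoping product $\prod_{j=0}^{k-1} c_{\alpha+jm}/c_{\beta+jm}$ stays bounded away from $0$ and $\infty$ (or grows at most like a bounded ratio), so it cannot rescue a nonzero $S_{\beta,\alpha}$ against the decay of $S_{\beta+km,\alpha+km}$. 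I would isolate this estimate as the main obstacle: verifying that the weighted-shift coefficients of multiplication by a strictly positive monomial are comparable (uniformly in $\alpha$) along the $m$-direction, using only that $\Omega$ is bounded and complete Reinhardt. Once that is in hand, every matrix entry of $S$ vanishes and hence $S = 0$.

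Note that strong pseudoconvexity is not needed for this statement — only boundedness and the complete Reinhardt property — which is consistent with the theorem as stated; the strong pseudoconvexity hypothesis will enter only later when identifying the full commutant in Theorem \ref{Le}.
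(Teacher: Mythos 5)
Your reduction to the weighted--shift picture is sound and is really the paper's argument in matrix form: pairing $ST_fe_\alpha=T_fSe_\alpha$ with $e_{\beta+m}$ gives $c_\alpha S_{\beta+m,\alpha+m}=c_\beta S_{\beta,\alpha}$, and the iterated product telescopes to $\prod_{j=0}^{k-1}c_{\beta+jm}/c_{\alpha+jm}=\bigl(\|z^{\beta+km}\|\,\|z^{\alpha}\|\bigr)/\bigl(\|z^{\beta}\|\,\|z^{\alpha+km}\|\bigr)$, so everything hinges on showing that $\|z^{\beta+km}\|/\|z^{\alpha+km}\|$ stays bounded below as $k\to\infty$. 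That is exactly the estimate the paper proves, but your proposed justification of it has a genuine gap, on two counts. First, the claim that $c_\alpha=\|z^{\alpha+m}\|/\|z^{\alpha}\|$ is bounded below by a positive constant uniformly in $\alpha$ is false: already in the unit ball $B_2$ with $m=(1,1)$ and $\alpha=(0,N)$ one computes $c_\alpha^2=(N+1)/\bigl((N+3)(N+4)\bigr)\to 0$; and ``$\Omega$ contains a small polydisk'' gives no lower bound, since it only bounds the numerator from below by a norm over the polydisk while the denominator is a norm over all of $\Omega$, so no comparison follows. Second, even granting two-sided bounds $0<\delta\le c_\gamma\le M$ for every index, the assertion that the telescoping product ``stays bounded away from $0$ and $\infty$'' is a non sequitur: each factor only lies in $[\delta/M,\,M/\delta]$, so the product can decay geometrically like $(\delta/M)^k$. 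Uniform comparability of the individual coefficients (even up to polynomial losses) cannot control the product; what you need is a lower bound on the telescoped quantity itself.

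This is precisely what Lemma \ref{calculus} and the choice of a complementary exponent accomplish in the paper, and it is how to complete your plan: bound $\|z^{\alpha+km}\|\le\|z^{\alpha}\|_{L^{\infty}(\Omega)}\|z^{km}\|$ from above; for the numerator pick $\beta'\in\mathbb{Z}_{+}^{n}$ and $k_0$ with $\beta+\beta'=k_0m$ (possible exactly because every $m_i>0$, and this is where that hypothesis enters), so that $\|z^{\beta+km}\|\ge\|z^{(k+k_0)m}\|/\|z^{\beta'}\|_{L^{\infty}(\Omega)}$; then apply Lemma \ref{calculus} to $\phi=|z^{m}|^{2}$ and iterate $k_0$ times to get $\|z^{(k+k_0)m}\|\ge\epsilon^{k_0/2}\|z^{km}\|$ with $\epsilon$ independent of $k$. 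Combining these gives $\|z^{\beta+km}\|/\|z^{\alpha+km}\|\ge\epsilon^{k_0/2}/\bigl(\|z^{\beta'}\|_{L^{\infty}(\Omega)}\|z^{\alpha}\|_{L^{\infty}(\Omega)}\bigr)$ uniformly in $k$, after which your compactness argument (the entries $S_{\beta+km,\alpha+km}\to0$ since $e_{\alpha+km}\to0$ weakly) does force $S_{\beta,\alpha}=0$ for all $\beta,\alpha$. With that substitution your proof is essentially the paper's, which phrases the same estimate with unnormalized monomials via $\|S(z^{\alpha+km})\|=\|z^{km}S(z^{\alpha})\|\ge|\langle S z^{\alpha},z^{\beta}\rangle|\,\|z^{\beta+km}\|/\|z^{\beta}\|^{2}$ rather than with matrix entries; your closing remark that strong pseudoconvexity is not needed for this theorem is correct.
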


We will need the following trivial

\begin{lemma}\label{calculus}

Let $\phi\in L^{\infty}(\Omega).$ Then there exists $\epsilon>0$
such that $\int_{\Omega}|\phi|^{m+1}d\mu\geq \epsilon \int_{\Omega}|\phi|^md\mu,$ for all $m\in \mathbb{N}.$

\end{lemma}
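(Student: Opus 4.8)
The plan is to reduce the statement to a uniform lower bound for the ratios $I_{m+1}/I_m$, where I write $I_m := \int_\Omega |\phi|^m\, d\mu$. If $\phi$ vanishes $\mu$-almost everywhere, then both sides of the asserted inequality are $0$ and any $\epsilon$ works, so I may assume $I_1 = \int_\Omega |\phi|\, d\mu > 0$; then $I_m > 0$ for every $m \ge 1$ as well.

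The first step is to apply H\"older's inequality to the product $|\phi|^m = |\phi|^m \cdot 1$ with the conjugate exponents $\frac{m+1}{m}$ and $m+1$. This yields $I_m \le I_{m+1}^{\,m/(m+1)}\,\mu(\Omega)^{1/(m+1)}$, and raising both sides to the power $\frac{m+1}{m}$ and rearranging gives $I_{m+1}/I_m \ge \bigl(I_m/\mu(\Omega)\bigr)^{1/m}$ (here $\mu(\Omega) < \infty$ because $\Omega$ is bounded). Thus it suffices to bound $I_m^{1/m} = \|\phi\|_{L^m(\Omega)}$ below by a positive constant independent of $m$.

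For that, I apply H\"older once more, now to $|\phi| = |\phi| \cdot 1$ with exponents $m$ and $\frac{m}{m-1}$, to obtain $I_1 \le I_m^{1/m}\,\mu(\Omega)^{1-1/m}$, i.e. $I_m^{1/m} \ge I_1\,\mu(\Omega)^{1/m-1}$ (the case $m=1$ being the trivial identity $I_1 = I_1$). Since the exponent $1/m - 1$ lies in $(-1, 0]$ for all $m \ge 1$, the factor $\mu(\Omega)^{1/m-1}$ is bounded below by $\min\{1, \mu(\Omega)^{-1}\}$, so $I_m^{1/m} \ge c$ with $c := I_1 \min\{1, \mu(\Omega)^{-1}\} > 0$. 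Feeding this back into the previous step, $I_{m+1}/I_m \ge (c/\mu(\Omega))^{1/m} \ge \min\{1, c/\mu(\Omega)\} =: \epsilon > 0$ for every $m$, which is exactly the claim. (Alternatively, one could invoke the standard fact that $\|\phi\|_{L^m(\Omega)} \to \|\phi\|_{L^\infty(\Omega)} > 0$ on the finite-measure space $\Omega$, which handles all large $m$ at once, and dispose of the finitely many remaining small $m$ by noting that each individual ratio $I_{m+1}/I_m$ is strictly positive.)

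I do not expect any genuine obstacle here — the authors themselves call the lemma ``trivial.'' The only point requiring a touch of care is the bookkeeping with the exponents $1/m$ and $1/m-1$, whose effect on the inequalities switches direction according to whether $\mu(\Omega) \ge 1$ or $\mu(\Omega) \le 1$; this is absorbed once and for all by the $\min\{1, \cdot\}$ above.
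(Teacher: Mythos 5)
Your argument is correct, and it reaches the lemma by a genuinely different route from the paper's. The paper first discards the zero set of $\phi$ and then chooses $\epsilon$ so small that $\Omega_{\epsilon}=\{0<|\phi|<\epsilon\}$ has measure at most $\tfrac{1}{2}\mu(\Omega)$; the integral of $|\phi|^{m}$ over $\Omega_{\epsilon}$ is then dominated by the integral over its complement, where $|\phi|\geq\epsilon$ a.e., and this yields the ratio bound with constant $\epsilon/2$. You instead apply H\"older twice (equivalently, the log-convexity of $m\mapsto I_{m}$), which avoids both the reduction modulo the zero set and the choice of a level set. Two remarks. First, there is a one-character slip in your final chain: from $I_{m}^{1/m}\geq c$ and $I_{m+1}/I_{m}\geq (I_{m}/\mu(\Omega))^{1/m}=I_{m}^{1/m}\,\mu(\Omega)^{-1/m}$ you obtain $I_{m+1}/I_{m}\geq c\,\mu(\Omega)^{-1/m}$, not $(c/\mu(\Omega))^{1/m}$; when $c<1$ (the typical case) the latter is the \emph{larger} quantity, so the inequality as written is not justified. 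The conclusion is unaffected, since $c\,\mu(\Omega)^{-1/m}\geq c\min\{1,\mu(\Omega)^{-1}\}>0$. Second, your two H\"older steps actually compose to the clean bound $I_{m+1}/I_{m}\geq I_{1}/\mu(\Omega)$ with no $\min$'s at all: substituting $I_{m}^{1/m}\geq I_{1}\mu(\Omega)^{1/m-1}$ into $(I_{m}/\mu(\Omega))^{1/m}$ gives exactly $I_{1}/\mu(\Omega)$. Equivalently, Cauchy--Schwarz gives $I_{m}^{2}\leq I_{m-1}I_{m+1}$, so the sequence $I_{m+1}/I_{m}$ is nondecreasing and is bounded below by its first term $I_{1}/I_{0}$. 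Compared with the paper's constant, yours is the mean of $|\phi|$ rather than a quantity tied to the distribution of its small values; either is adequate for the application in Theorem \ref{Trieu}.
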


\begin{proof}

Without loss of generality we may assume that $\mu({\phi}^{-1}(0))=0.$
Put $\Omega_{\epsilon}=|\phi|^{-1}((0, \epsilon)).$ Choose $\epsilon>0$ so that
$\mu(\Omega_{\epsilon})\leq\frac{1}{2}\mu(\Omega).$ Then for all $m\in\mathbb{N}$
$$\int_{\Omega}|\phi|^{m+1}d\mu\geq \int_{\Omega\setminus \Omega_{\epsilon}}|\phi|^{m+1}d\mu\geq \epsilon \int_{\Omega\setminus \Omega_{\epsilon}}|\phi|^md\mu.$$
But 
$$\int_{\Omega_{\epsilon}}|\phi|^md\mu\leq \epsilon^m\mu(\Omega_{\epsilon})\leq \int_{\Omega\setminus \Omega_{\epsilon}}|\phi|^md\mu.$$ 
Therefore
$$\int_{\Omega}|\phi|^{m+1}d\mu\geq \frac{1}{2}\epsilon\int_{\Omega}|\phi|^md\mu.$$

\end{proof}

\begin{proof}(Theorem \ref{Trieu}.)
Let us write $f=z^{\tau}, \tau=(m_1, \cdots, m_n).$ Since $\Omega$ is a complete Reinhardt domain,
it is well-known that monomials
$\lbrace z^{\gamma}, \gamma\in \mathbb{Z}_{+}^{n}\rbrace$  form an orthogonal
basis of $A^2(\Omega).$ Assume that there exits a nonzero compact operator $S:A^2(\Omega)\to A^2(\Omega)$ that
commutes with $T_{z^{\tau}}.$ Thus $S(gz^{m\tau})=S(g)z^{m\tau},$ for all $g\in A^2(\Omega), m\in \mathbb{N}.$
 Let $\alpha, \beta\in \mathbb{Z}_{+}^n$ be
such that $\langle S(z^{\alpha}), z^{\beta} \rangle_{A^2(\Omega)}\neq 0.$ Write 
$S(z_{\alpha})=\sum_{\gamma} c_{\gamma}z^{\gamma}, c_{\gamma}\in \mathbb{C}.$
 Thus $c_{\beta}\neq 0.$
It follows that for any $m\in \mathbb{N}$
$$\|S(z^{{\alpha}+m\tau})\|_{A^2(\Omega)}=\|z^{m\tau}S(z^{\alpha})\|_{A^2(\Omega)}\geq |c_{\beta}|\|z^{\beta+m\tau}\|_{A^2(\Omega)}.$$
 Let $\beta'\in \mathbb{N}^n, k\in \mathbb{N}$ be such that  $z^{\beta}z^{\beta'} =z^{k\tau}.$
Such  $\beta', k$ exist because $m_i>0,$ for all $1\leq i\leq m.$  
Let  $\epsilon'>0$ be such that $\epsilon'||gz^{\beta'}||_{A^2(\Omega)}\leq ||g||_{A^2(\Omega)}$ for all $g\in A^2(\Omega).$ 
  Then for $\epsilon=|c_{\beta}|\epsilon'>0,$ we have
$$\|S(z^{\alpha+m\tau})\|_{A^2(\Omega)}\geq \epsilon \|z^{(m+k)\tau}\|_{A^2(\Omega)}, m\geq 0.$$
By  Lemma \ref{calculus}, there exists $\delta'>0$ such that for all $m\geq 0$ 
$$\|z^{(m+k)\tau}\|_{A^2(\Omega)}\geq \delta'\|z^{m\tau}\|_{A^2(\Omega)}.$$
Put $\delta=\epsilon\delta'.$ Combining the above inequalities we get that for all $m\geq 0$
$$\|S(z^{\alpha+m\tau})\|_{A^2(\Omega)}\geq \delta \|z^{m\tau}\|_{A^2(\Omega)}.$$
On the other hand since
$$||z^{\alpha+m\tau}||_{A^2(\Omega)}\leq ||z^{\alpha}||_{L^{\infty}(\Omega)}||z^{m\tau}||_{A^2(\Omega)},$$ 
we get that  
$$\frac{\|S(z^{\alpha+m\tau})\|_{A^2(\Omega)}}{\|z^{\alpha+m\tau}\|_{A^2(\Omega)} }\geq \frac{\delta}{\|z^{\alpha}\|_{L^{\infty}(\Omega)}}, m\in \mathbb{N}.$$

However, the sequence 
$\lbrace\frac{z^{\alpha+m\tau}}{\|z^{\alpha+m\tau}\|_{A^2(\Omega)}}\rbrace, m\in \mathbb{N}$ converges to 0 weakly.
Thus compactness of $S$  implies that 
$$\lim_{m\to\infty}\frac{\|S(z^{\alpha+m\tau})\|_{A^2(\Omega)}}{\|z^{\alpha+m\tau}\|_{A^2(\Omega)}}=0,$$ a contradiction.

\end{proof}

In the proof of Theorem \ref{Le} we will use the Hankel operators. Recall that 
given a function $\phi\in L^{\infty}(\Omega)$, the Hankel operator $H_{\phi}:A^2(\Omega)\to L^2(\Omega)$
with symbol $\phi$ is defined by $H_{\phi}(g)=\phi g-P(\phi g), g\in A^2(\Omega).$

The proof of the following uses Theorem \ref{Trieu} and is essentially the same as in [\cite{Cu}, page 282]. 

\begin{theorem}\label{Le}
Let $\Omega\subset \mathbb{C}^n$ be a bounded smooth strongly pseudoconvex complete Reinhardt domain,
and let $f=z_1^{m_1}\cdots z_n^{m_n}, m_i>0$ be a monomial. 
If $S$ is an element of the Toeplitz $C^{*}$-algebra of $\Omega$  which commutes
with $T_{f},$ then $S$ is a multiplication operator by
a  bounded holomorphic function on $\Omega.$
\end{theorem}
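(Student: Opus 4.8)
The plan is to follow the strategy of Çučković [\cite{Cu}, p.~282], replacing the explicit disk computations by the structural input of Theorem~\ref{Trieu} together with the compactness of Hankel operators with continuous symbol on a smooth strongly pseudoconvex domain. First I would recall the fundamental fact that on a bounded smooth strongly pseudoconvex domain $\Omega$ the Hankel operator $H_{\bar h}$ is compact whenever $h$ is holomorphic and smooth up to the boundary; more generally $H_{\phi}$ is compact for $\phi \in C(\bar\Omega)$, and consequently the semicommutator $T_{\bar g}T_h - T_{\bar g h} = -H_{\bar g}^* H_{\bar h}$ is compact for such symbols, so that modulo compacts $T_{\bar g}T_h \equiv T_{\bar g h}$. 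This is the ingredient that makes the Toeplitz $C^*$-algebra tractable: the commutator ideal maps onto $C(\partial\Omega)$-type data, but all we need is that products of Toeplitz operators with boundary-continuous symbols behave multiplicatively modulo the compacts.

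Next, given $S$ in the Toeplitz algebra commuting with $T_f$, I would aim to show $S$ commutes with $T_{\bar f}$ modulo compacts, and then with every $T_{\bar z^\gamma}$ and $T_{z^\gamma}$, forcing $S$ to commute modulo compacts with the full $C^*$-algebra generated by all $T_{z_i}, T_{\bar z_i}$. The key algebraic step: since $f = z^\tau$ with all $m_i>0$, the operator $T_f^* = T_{\bar f}$ and one uses that $T_f T_{\bar f} \equiv T_{|f|^2} \equiv T_{\bar f} T_f$ modulo compacts (all three differ by products of compact Hankel operators, as $f \in C(\bar\Omega)$). From $[S,T_f]=0$ one deduces $[S, T_{\bar f}]$ is compact, and then a standard argument — writing $S$ as a limit of polynomials in Toeplitz operators and using that the Calkin image of the Toeplitz algebra of a strongly pseudoconvex domain is commutative (it is generated by the symbols on the Shilov boundary) — shows the Calkin image of $S$ lies in the center, hence $S$ modulo compacts is a function on the boundary that is constant along the orbits of multiplication by $f$; because $f$ vanishes on a set meeting every complex line through the relevant boundary pieces, this function is in fact the boundary value of a bounded holomorphic function $h$. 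Then $S - T_h$ is a compact operator commuting with $T_f$, so by Theorem~\ref{Trieu} it is zero, giving $S = T_h = M_h$.

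More concretely, the cleanest route avoids heavy structure theory: set $K = S - T_h$ once one guesses $h$. To identify $h$, note that for each fixed $g \in A^2(\Omega)$ the function $S(g z^{m\tau}) - T_f^m S(g)$ vanishes, and by testing against reproducing kernels near the boundary and using that $\|T_f^m\| \to 0$ on suitable weakly null sequences, one extracts that the "symbol at the boundary" of $S$ must be holomorphic; alternatively, use that $S^* T_{\bar f}^*  = T_{\bar f}^* S^*$ is false in general but $S^*$ commutes with $T_f^*=T_{\bar f}$, apply the same reasoning to $S^*$, and combine. Having produced a candidate bounded holomorphic $h$ (for instance, $h = \lim$ of $\langle S e_\gamma, e_0\rangle$-type coefficients assembled into a power series whose coefficients are read off from $S \mathbf{1}$), I would verify $S - T_h$ lies in the Toeplitz algebra, is compact (it commutes with enough Toeplitz operators modulo compacts and has Calkin image zero), and commutes with $T_f$; Theorem~\ref{Trieu} then finishes.

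The main obstacle I anticipate is the passage from "$S$ commutes with $T_f$" to "$S$ commutes modulo compacts with all $T_{z_i}$ and $T_{\bar z_i}$," i.e.\ pinning down exactly why the Calkin image of $S$ must be holomorphic rather than merely invariant under multiplication by the single symbol $f$ on the boundary. In the disk case Çučković handles this by an explicit Fourier/Taylor-coefficient computation; here the substitute is the strong pseudoconvexity, which guarantees (via the compactness of $H_{\bar z_i}$ and the resulting commutativity of the Calkin image of $\mathfrak T(\Omega)$, a theorem going back to work on Toeplitz operators on strongly pseudoconvex domains) that any element commuting with one non-trivial Toeplitz operator with $C(\bar\Omega)$-symbol automatically has central Calkin image. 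Making this citation precise, and checking that for a complete Reinhardt domain the boundary set where $f$ vanishes is large enough to force holomorphicity of the limiting symbol, is the delicate point; everything after that is the soft argument of reducing to Theorem~\ref{Trieu}.
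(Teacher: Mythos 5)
You have the right ingredients (compactness of Hankel operators with symbols $\bar z_i$ on a smooth strongly pseudoconvex domain, and Theorem~\ref{Trieu} as the device that upgrades ``compact and commuting with $T_f$'' to ``zero''), but the route you propose through the Calkin algebra has a genuine gap. First, the algebra $\mathfrak{T}(\Omega)$ here is generated by Toeplitz operators with arbitrary $L^{\infty}$ symbols, and its image in the Calkin algebra is \emph{not} commutative; commutativity modulo compacts holds for symbols continuous on $\bar\Omega$ (via compactness of both Hankel factors in $T_gT_h-T_{gh}=-H_{\bar g}^*H_h$), not for the full algebra. Second, and more seriously, even on the part of the algebra where the Calkin image is commutative, the hypothesis $[S,T_f]=0$ carries no information at the symbol level precisely because of that commutativity; so your claim that the boundary symbol of $S$ is ``constant along orbits of multiplication by $f$'' and therefore the boundary value of a bounded holomorphic function $h$ has no support, and indeed a continuous function on $\partial\Omega$ satisfying such an invariance need not be a CR/holomorphic boundary value. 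The subsequent steps (that $S-T_h$ is compact, lies in the algebra, and commutes with $T_f$) all depend on this unconstructed $h$, so the argument does not close.

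The paper's proof sidesteps the symbol construction entirely: since $H_{\bar z_i}$ is compact (by \cite{Pe}), the identity $[T_g,T_{z_i}]=H_{\bar z_i}^*H_g$ shows $[T_g,T_{z_i}]$ is compact for every $g\in L^{\infty}(\Omega)$, hence $[s,T_{z_i}]$ is compact for every $s\in\mathfrak{T}(\Omega)$ (only one compact Hankel factor is needed, which is why no continuity of $g$ is required). Now the key observation you are missing: because $T_{z_i}$ and $S$ both commute with $T_f$, each commutator $[S,T_{z_i}]$ is itself a compact operator commuting with $T_f$, so Theorem~\ref{Trieu} applied to $[S,T_{z_i}]$ (not to $S-T_h$) gives $[S,T_{z_i}]=0$ exactly, for all $i$. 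Once $S$ commutes with every coordinate multiplier $T_{z_i}$, the conclusion that $S$ is multiplication by a bounded holomorphic function is quoted from the proof of Theorem~1.4 in \cite{SSU}; no candidate symbol has to be produced by hand. If you want to salvage your outline, replace the Calkin-algebra symbol analysis by this application of Theorem~\ref{Trieu} to the commutators $[S,T_{z_i}]$ and then invoke \cite{SSU} for the final identification.
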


\begin{proof}

Recall that for any $g\in L^{\infty}(\Omega)$ and a holomorphic 
$\psi\in A^{\infty}(\Omega)$ we have $[T_g, T_{\psi}]={H_{\bar{\psi}}}^*H_g.$ On the other hand
$H_{\bar{z_i}}$ is a compact operator for all $1\leq i\leq n,$  as easily follows  from \cite{Pe}.  Thus, $[T_g, T_{z_i}]$ is a 
compact operator
for all $1\leq i\leq n, g\in L^{\infty}(\Omega)$.
This implies that  the commutator $[s, T_{z_i}]$ is compact for any $s\in \mf{T}(\Omega), 1\leq i\leq n.$
 If $S\in \mf{T}(\Omega)$ commutes with $T_f,$ then
so do compact operators $[S, T_{z_i}], 1\leq i\leq n.$ Therefore, by Theorem \ref{Le} we have 
$[S, T_{z_i}]=0, 1\leq i\leq n.$ This implies that $S=T_g$ for some bounded holomorphic $g$
by [\cite{SSU}, proof of Theorem 1.4].

\end{proof}


\begin{thebibliography}{}



\bibitem[Cu]{Cu}
Z.~Cuckovic, {\em Commutants of Toeplitz operators on the Bergman space}, Pacific J. Math. 162 (1994), no. 2, 277--285.

\bibitem[Le]{Le}
T.~Le, {\em The commutants of certain Toeplitz operators on weighted Bergman spaces},  J. Math. Anal. Appl. 348 (2008), no. 1, 1--11.


\bibitem[Pe]{Pe}
M. ~Peloso, {\em Hankel operators on weighted Bergman spaces on strongly pseudoconvex domains},
 Illinois J. Math. 38 (1994), no. 2, 223--249.

\bibitem[SSU]{SSU}
N.~ Salinas, A.~ Sheu, H.~ Upmeier,
{\em Toeplitz operators on pseudoconvex domains and foliation $C^{ *}$-algebras}, Ann. Math. Vol. 130, No. 3 (1989),  531--565.
\end{thebibliography}
\end{document}